\documentclass{amsart}

\usepackage{fontenc}
\usepackage{amssymb,amsmath,amscd,latexsym,amsfonts,amstext,amsbsy,amsthm}
\usepackage{euscript}
\usepackage{enumerate}

\usepackage[centertags]{amsmath}
\usepackage{amsfonts}
\usepackage{amssymb}
\usepackage{amsthm}
\usepackage{newlfont}


\vfuzz2pt 

 \newtheorem{theorem}{Theorem}[section]

 \newtheorem{proposition}[theorem]{Proposition}
 
 \theoremstyle{definition}

 \numberwithin{equation}{subsection}
  

 \newcommand{\norm}[1]{\left\Vert#1\right\Vert}

\newcommand{\bp}{\begin{problem}}

\newcommand{\ep}{\end{problem}}

\newcommand{\ben}{\begin{enumerate}}
\newcommand{\een}{\end{enumerate}}



\begin{document}

\title{Borel Spectrum of Operators on  Banach Spaces}
\author{Mohammed Yahdi}

\address{Department of  Mathematics and
Computer Science\\
Ursinus College\\
Collegeville,  PA 19426, USA}

\email{myahdi@ursinus.edu}

\subjclass[2000]{Primary 54C10, 47A10, 54H05.}

\keywords{Borel Function, Banach Space,  Operator, Polish Space,
Spectrum.}

\maketitle


\begin{abstract}
\noindent \emph{The paper investigates  the variation of the spectrum
 of operators in infinite dimensional Banach spaces. In particular,
it is shown that the spectrum function is Borel from the space of bounded
operators on a separable Banach space; equipped with
the strong operator topology, into the Polish space
 of compact subsets of the closed unit disc
of the  complex plane; equipped with the
Hausdorff topology.  Remarks and results are given  when other
topologies are used.}

\end{abstract}


\section{Preliminary}

Let $X$ be an infinite dimensional Banach space. We denote by $T$ an
arbitrary bounded operator on $X$ and by $I$  the identity operator
on $X$. Let $\mathbb{D}$  be the closed unit disc of the complex
plane $\mathbb{C}$. The restriction on $\mathbb{D}$ of the spectrum of an operator $T$, denoted by  $ \sigma(T)$,
is defined as follows:
\[
\mathcal{\sigma}(T) := \{ \lambda \in \mathbb{D};
 ~ (T- \lambda \mathnormal{I}) \hbox{ is not invertible } \}.
 \]
Recently, essential spectra of some matrix
operators on Banach spaces (see \cite{JERIBI}) and spectra of some block operator matrices (see \cite{Jeribi2})
were investigated, with applications ro  differential and transport operators
This paper investigates the variations of the spectrum
$\mathcal{\sigma}(T)$ as $T$ varies over the space $L(X)$ of  all bounded operator on the Banach space $X$.
 First, we introduce the sets
and the topologies required for this study. We denote by
\begin{itemize}
  \item $\mathcal{K}(\mathbb{D})$ the set of all compact subsets of
  the closed unit disc $\mathbb{D}$ of the complex plane
  $\mathbb{C}$,
  \item $\mathcal{\sigma}$ the spectrum function defined from $L(X)$
  into $\mathcal{K}(\mathbb{D})$ that maps an operator $T$ to its
  spectrum $\mathcal{\sigma}(T)$.
\end{itemize}

The set $\mathcal{K}(\mathbb{D})$  is endowed with the Hausdorff
topology generated by the families  of all subsets in one of the
following forms
\[\big{\{}F \in \mathcal{K}(\mathbb{D}); ~ F \cap V \ne \emptyset\big{\}}
\quad \text{ and } \quad
 \big{\{}F \in \mathcal{K}(\mathbb{D}); ~ F \subseteq V  \big{\}},\]
for $V$  an open subset of  $\mathbb{D}$. Therefore,
$\mathcal{K}(\mathbb{D})$ is a Polish space, i.e., a separable
metrizable complete space, since $\mathbb{D}$ is Polish (see
\cite{KL},\cite{Ku} or \cite{C}). It is shown below that we can
reduce the families that generate the above Hausdorff topology.

\begin{proposition} \label{prop:hausdorff}
 Let $\mathcal{K}(\mathbb{D})$ be the set of compact subsets of the closed unit disc $\mathbb{D}$.
 Then  $\mathcal{K}(\mathbb{D})$  equipped with the Hausdorff topology is a
Polish space;  where the Borel structure is generated by one
of the following two  families

\[
\Big{\{}\{K \in \mathcal{K}(\mathbb{D}): K \cap V \neq \emptyset\};
 V \hbox{ open in } \mathbb{D}  \Big{\}}
\]
\[
\Big{\{}\{K \in \mathcal{K}(\mathbb{D}): K \subset V \};
 V \hbox{ open in } \mathbb{D}  \Big{\}}.
\]
\end{proposition}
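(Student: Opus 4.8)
The plan is to show that each of the two families generates exactly the Borel $\sigma$-algebra of the Hausdorff topology, and since that topology is generated by the union of the two families, it suffices to prove the two cross-containments: that every inclusion set $\{K : K \subset V\}$ lies in the $\sigma$-algebra generated by the hitting family, and that every hitting set $\{K : K \cap V \neq \emptyset\}$ lies in the $\sigma$-algebra generated by the inclusion family. First I would record the reduction that lets us work at the level of $\sigma$-algebras: since $\mathcal{K}(\mathbb{D})$ is Polish, it is second countable and hence Lindel\"of, so every open set is a countable union of finite intersections of the subbasic sets listed. It follows immediately that the Borel $\sigma$-algebra coincides with the $\sigma$-algebra generated by the union of the two families, and the problem is reduced to the two cross-containments above.

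The device for both is to approximate a closed set from the outside by open sets. Given an open $V \subset \mathbb{D}$, I would set $F := \mathbb{D} \setminus V$ (closed) and $U_n := \{x \in \mathbb{D} : d(x,F) < 1/n\}$ (open), so that, because $F$ is closed in the metric space $\mathbb{D}$, one has $\bigcap_n U_n = F$. For the first cross-containment I would prove the key equivalence that a \emph{compact} set $K$ meets $F$ if and only if it meets every $U_n$: the forward implication is immediate from $F \subset U_n$, and for the converse I would pick $x_n \in K \cap U_n$, pass to a subsequence $x_{n_k} \to x \in K$ using compactness of $K$, and conclude from continuity of $y \mapsto d(y,F)$ that $d(x,F)=0$, hence $x \in K \cap F$. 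This yields
\[
\{K : K \cap F \neq \emptyset\} = \bigcap_{n} \{K : K \cap U_n \neq \emptyset\},
\]
a countable intersection of hitting sets; complementing gives $\{K : K \subset V\} = \{K : K \cap F \neq \emptyset\}^{c}$ inside the $\sigma$-algebra generated by the hitting family.

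For the second cross-containment the argument is dual and in fact simpler, needing no compactness. Since $\bigcap_n U_n = F$, a set $K$ is contained in $F$ precisely when it is contained in every $U_n$, so
\[
\{K : K \subset F\} = \bigcap_{n} \{K : K \subset U_n\},
\]
a countable intersection of inclusion sets. Complementing gives $\{K : K \cap V \neq \emptyset\} = \{K : K \subset F\}^{c}$, which therefore lies in the $\sigma$-algebra generated by the inclusion family. Combining the two cross-containments with the second-countability reduction of the first paragraph shows that either family alone generates the Borel structure, as claimed.

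The step I expect to be the crux is the equivalence ``$K$ meets $F$ if and only if $K$ meets every $U_n$'' used in the first cross-containment. This is exactly the place where compactness of the members of $\mathcal{K}(\mathbb{D})$ is indispensable, and it is what makes the hitting sets---not only the inclusion sets---interact well with the outer approximation of $F$. Everything else is routine: the reduction from subbase to Borel $\sigma$-algebra via the Lindel\"of property, and the passage between hitting and inclusion sets by complementation.
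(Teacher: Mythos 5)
Your proof is correct and follows essentially the same route as the paper: approximate the closed set $V^c$ from outside by countably many open sets, express each subbasic set of one family (after complementation) as a countable intersection of sets from the other family, and use compactness of $K$ plus a convergent subsequence exactly where the paper does. The only differences are cosmetic improvements: you make explicit the second-countability/Lindel\"of reduction that lets one pass from the subbasis to the full Borel $\sigma$-algebra (the paper leaves this implicit), and your sets $U_n=\{x: d(x,F)<1/n\}$ are genuinely open, whereas the paper's $O_n$ defined with ``$\leq$'' are in fact closed (a harmless slip).
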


\begin{proof}

Let  $V$ be an open subset of  $D$. There exists a decreasing
sequence of open subsets $(O_n)_{n\in \mathbb{N}}$ such that
 $V^c=\bigcap_{n\in \mathbb{N}}O_n$;
for example $O_n=\{x \in \mathbb{D}: dist(x,V^c) \leq
\frac{1}{n}\}$.
We have

\begin{align*}
\{K \in \mathcal{K}(\mathbb{D}): K \cap V \neq \emptyset\}^c
=&\{K \in \mathcal{K}(\mathbb{D}): K \subseteq V^c \}\\
=&\bigcap_{n\in \mathbb{N}} \{K \in \mathcal{K}(\mathbb{D}): K
\subseteq O_n \}.
\end{align*}

On the other hand,
\begin{align*}
\{K \in \mathcal{K}(\mathbb{D}): K \subseteq V \}^c
=&\{K \in \mathcal{K}(\mathbb{D}): K \cap V^c \neq \emptyset\}\\
=&\{K \in \mathcal{K}(\mathbb{D}):  K \cap O_n \neq \emptyset, ~ \forall n \in \mathbb{N}\}\\
=&\bigcap_{n\in \mathbb{N}}\{K \in \mathcal{K}(\mathbb{D}):  K \cap
O_n \neq \emptyset\}.
\end{align*}
%
Indeed, if for all $n \in\mathbb{N}$, there exists $x_n \in K \cap O_n$, then
there exists a subsequence $(x_{n_k})_k$ of $(x_{n})_n$ that
converges to $x \in K$, and $x \in \bigcap_n O_n$ since $(O_n)_n$ is
decreasing.

\end{proof}

\section{Norm Operator Topology and  the Spectrum Function}

 We equip $L(X)$ with
 the canonical  norm of operators defined by $\norm{T}= \sup_{x \in B_X}
 \norm{T(x)}$, where $B_X$ is the unit ball of $X$.
Note that the map $\mathcal{\sigma}: T \longmapsto   \sigma(T)$ is
 not continuous when $L(X)$ is endowed
with  its canonical  norm. Indeed, the operators
$T_n=(1+\frac{1}{n})I$ converge to the identity $I$ while
$\mathcal{\sigma}(T_n)=\emptyset$ and $\mathcal{\sigma}(I)=\{1\}$.
However, we have the following result.
\begin{proposition}
Let $X$ be a Banach space, $(L(X), \norm{.})$ the space of bounded
operators equipped with the norm of operators, and
$\mathcal{K}(\mathbb{D})$ the set of compact subsets of the unit
disc $\mathbb{D}$ equipped with the Hausdorff topology. Then the
spectrum map
\begin{eqnarray*}
\mathcal{\sigma}:  \big{(}L(X),\norm{.}\big{)} & \longrightarrow &  \mathcal{K}(\mathbb{D}) \\
                    T & \longmapsto &   \sigma(T)
\end{eqnarray*}
is upper-semi continuous.
\end{proposition}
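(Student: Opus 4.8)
The plan is to read upper-semicontinuity through the ``upper'' generating family identified in Proposition~\ref{prop:hausdorff}: I would show that for every open $V \subseteq \mathbb{D}$ the preimage $\{T \in L(X) : \sigma(T) \subseteq V\}$ is open in the norm topology. Concretely, fixing an operator $T_0$ with $\sigma(T_0) \subseteq V$, I would exhibit an $\eps > 0$ such that $\norm{T - T_0} < \eps$ forces $\sigma(T) \subseteq V$, which is exactly the statement that $T_0$ has a norm-neighborhood mapped into $\{K : K \subseteq V\}$.

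First I would pass to the complement $\mathbb{D} \setminus V$, a closed subset of the compact disc $\mathbb{D}$ and hence itself compact. Since $\sigma(T_0) \subseteq V$, every $\lambda \in \mathbb{D} \setminus V$ lies outside the restricted spectrum, and being in $\mathbb{D}$ it lies in the resolvent set of $T_0$; thus $T_0 - \lambda I$ is invertible for each such $\lambda$. The next step invokes continuity (in fact analyticity) of the resolvent $\lambda \mapsto (T_0 - \lambda I)^{-1}$ on the resolvent set: the scalar map $\lambda \mapsto \norm{(T_0 - \lambda I)^{-1}}$ is continuous on the compact set $\mathbb{D} \setminus V$ and therefore attains a finite maximum $M$. (If $\mathbb{D} \setminus V$ is empty the containment $\sigma(T)\subseteq V$ is automatic, so I may assume $M$ is well defined and positive.)

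The core step is a Neumann-series perturbation argument carried out uniformly in $\lambda$. For $\lambda \in \mathbb{D} \setminus V$ I would factor
\[
T - \lambda I = (T_0 - \lambda I)\big(I + (T_0 - \lambda I)^{-1}(T - T_0)\big),
\]
and observe that if $\norm{T - T_0} < \eps := 1/M$, then $\norm{(T_0 - \lambda I)^{-1}(T - T_0)} \le M \norm{T - T_0} < 1$, so the bracketed factor is invertible by the Neumann series and hence $T - \lambda I$ is invertible. Because $M$ does not depend on $\lambda$, this holds simultaneously for all $\lambda \in \mathbb{D} \setminus V$, giving $\sigma(T) \cap (\mathbb{D} \setminus V) = \emptyset$, that is $\sigma(T) \subseteq V$, whenever $\norm{T - T_0} < \eps$.

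I expect the main obstacle to be the uniformity of the resolvent bound: pointwise invertibility of $T_0 - \lambda I$ is immediate, but one must ensure the perturbation radius $\eps = 1/M$ does not degenerate to zero as $\lambda$ ranges over $\mathbb{D} \setminus V$. This is precisely what compactness of $\mathbb{D} \setminus V$ together with continuity of the resolvent delivers. A minor point worth recording is that, since $\sigma(T)$ is by definition the restriction of the spectrum to $\mathbb{D}$, only the parameters $\lambda \in \mathbb{D} \setminus V$ must be controlled, so any spectral mass lying outside $\mathbb{D}$ is irrelevant to the containment $\sigma(T) \subseteq V$ and the argument is unaffected by it.
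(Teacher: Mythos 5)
Your proposal is correct and follows essentially the same route as the paper: reduce to showing $\{T : \sigma(T) \subseteq V\}$ is norm-open via the ``$K \subseteq V$'' generating family, use compactness of $\mathbb{D} \setminus V$ and continuity of the resolvent to get a uniform bound $M$ on $\norm{(T_0 - \lambda I)^{-1}}$, and conclude invertibility of $T - \lambda I$ for $\norm{T - T_0} < 1/M$. The only differences are cosmetic: you spell out the Neumann-series factorization that the paper leaves implicit, and you note the degenerate case $\mathbb{D} \setminus V = \emptyset$, which the paper omits.
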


\begin{proof}

 Let  $V$ be an open subset  $\mathbb{D}$. By proposition \ref{prop:hausdorff}, it is only  need to show that
 the set $O_V=\{T \in L(X); \sigma(T) \subseteq V\}$ is $\norm{.}$-open in $L(X)$.
 Let $T_0$ be fixed in $O_V$. Since
$\sigma(T_0)\cap \mathbb{D} \subseteq V$, then for all
 $\lambda \in \mathbb{D} \setminus V$, the operator $(T_0-\lambda I)$
is invertible and  the map   $\lambda \in \mathbb{D}
\setminus V \mapsto (T_0-\lambda I)^{-1}$ is continuous (see
\cite{schartz
}).
 It follows that

\[
\sup_{\lambda \in  \mathbb{D} \setminus V} \norm{(T-\lambda
I)^{-1}}< +\infty
\]
since $ \mathbb{D} \setminus V$ is compact.   Put

\[
\delta = \inf_{\lambda \in  \mathbb{D} \setminus V}
\frac{1}{\norm{(T_0-\lambda I)^{-1}}} >0.
\]

Let $T \in L(X)$ such that $\norm{T-T_0}<\delta$. For any $\lambda
\in \mathbb{D} \setminus V$ we have

\[
\norm{(T-\lambda I)-(T_0-\lambda
I)}=\norm{T-T_0}<\frac{1}{\norm{(T_0-\lambda I)^{-1}}}\cdot
\]
Thus, $(T-\lambda I)$ is invertible
 and hence
$\lambda \notin \sigma(T)$. In other terms, $\sigma(T) \subseteq V$
for all $T\in L(X)$ with $\norm{T-T_0}<\delta$. Therefore $O_V$ is
an open subset of $\big{(}L(X),\norm{.}\big{)}$.
\end{proof}

\section{Strong Operator Topology and  the Spectrum Function}

Consider now $L(X)$ equipped with  the strong operator topology
$S_{op}$ ( see \cite{schartz}).
In general,  $L(X)$ equipped with the strong operator topology is
not a polish space (since it is not a Baire space). However, if $X$
is separable, then
 $(L(x), S_{op})$ is  a standard Borel space. Indeed, it is Borel-isomorph to a Borel  subset of the Polish space
$ X^\mathbb{N}$ equipped with the norm product topology via the map
\begin{eqnarray*}
\varphi: \big{(}L(X), S_{op}\big{)}  & \longrightarrow &
\big{(} X^\mathbb{N}, \mathcal{P}\big{)} \\
T   & \longmapsto &  (Tz_n)_{n \in \mathbb{N}},
\end{eqnarray*}
where $\{z_n, n \in \mathbb{N}\}$ is a dense $\mathbb{Q}$-vector
space in $X$.

Let us check how this topology on $L(x)$ affects the spectrum
function.

\begin{theorem} \label{prop:Psi}
For any separable infinite dimensional  Banach  $X$, the map
\begin{eqnarray*}
\mathcal{\sigma}:  L(X) & \longrightarrow &  \mathcal{K}(\mathbb{D}) \\
                    T & \longmapsto &   \sigma(T),
\end{eqnarray*}
 which maps a bounded operator  to its
spectrum, is Borel when $L(X)$ is endowed with the strong operator
topology $S_{op}$ and $\mathcal{K}(\mathbb{D})$ with the Hausddorf
topology.
 \end{theorem}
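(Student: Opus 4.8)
The plan is to show that the spectrum map $\sigma$ is Borel from $(L(X), S_{op})$ to $\mathcal{K}(\mathbb{D})$ with the Hausdorff topology.

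By Proposition 1.1 (the proposition labeled prop:hausdorff), the Borel structure on $\mathcal{K}(\mathbb{D})$ is generated by sets of the form $\{K : K \cap V \neq \emptyset\}$ for $V$ open. So it suffices to show that for each open $V \subseteq \mathbb{D}$, the preimage $\sigma^{-1}(\{K : K \cap V \neq \emptyset\}) = \{T \in L(X) : \sigma(T) \cap V \neq \emptyset\}$ is Borel in $(L(X), S_{op})$.

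The key idea should be to characterize "$\lambda \in \sigma(T)$" (i.e. $T - \lambda I$ not invertible) in terms of countably many conditions that are themselves Borel in the strong operator topology. The natural approach: $T - \lambda I$ fails to be invertible iff either it fails to be bounded below (not injective with closed range) or it fails to be surjective. One would want to express these via the dense $\mathbb{Q}$-vector space $\{z_n\}$ and rational approximations to $\lambda$. For instance, "$T-\lambda I$ is not bounded below" means $\inf_{\|x\|=1}\|(T-\lambda I)x\| = 0$, which can be written using the countable dense set as a countable intersection/union of conditions of the form $\|(T-\lambda I)z_n\| < \frac{1}{m}$ with $\|z_n\|$ near $1$; each such condition is, via the map $\varphi$ in the preceding paragraph, a Borel condition in $S_{op}$.

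The main obstacle will be surjectivity: being bounded below is readily captured by strong-operator data on the countable dense set, but surjectivity (or cosurjectivity of the range) is not a strong-operator-continuous or obviously countable condition and is the delicate part. The plan would be to use a resolvent/approximation argument: express membership in the spectrum using a countable dense set $\{\mu_k\}$ of rationals in $\mathbb{D}$ and reduce to showing that $\{T : \sigma(T)\cap V \neq \emptyset\}$ can be written as a countable union over $\mu_k \in V$ of Borel sets $\{T : \mu_k \in \sigma(T)\}$ together with an appropriate limiting procedure; since the spectrum is closed, $\sigma(T)\cap V \neq \emptyset$ for open $V$ is equivalent to $\sigma(T)$ meeting a countable dense subset of $V$ in a limiting sense, which lets us reduce to the pointwise condition $\mu_k \in \sigma(T)$. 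The final step is to verify that each $\{T : \mu \in \sigma(T)\}$ is Borel, handling the non-invertibility by combining the Borel "not bounded below" condition with a Borel formulation of the failure of surjectivity — the latter being where the separability of $X$ and the structure of $\varphi$ must be exploited carefully.
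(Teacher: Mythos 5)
Your opening reduction (via Proposition \ref{prop:hausdorff}, to showing that $E_V=\{T\in L(X):\sigma(T)\cap V\neq\emptyset\}$ is Borel) and your treatment of the ``not bounded below'' condition do match the paper, but the two points you yourself flag as delicate are left genuinely unresolved, and your proposed fixes would fail as stated. First, surjectivity: the paper's key move is not to express failure of surjectivity directly, but to replace it by failure of \emph{dense range}. Since an operator that is bounded below has closed range, $T-\lambda I$ is non-invertible iff it is not an isomorphism onto its range \emph{or} its range is not dense in $X$; and ``range not dense'' --- unlike surjectivity --- is countably expressible: there exist $y$ in a countable dense subset $\mathcal{Y}$ of the sphere $S_X$ and $k\geq 1$ with $\norm{y-(T-\lambda I)x_n}\geq \frac{1}{k}$ for all $n$, where $(x_n)_n$ is dense in $X$. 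Each such condition defines a Borel subset of $L(X)\times\mathbb{D}$ for the topology $S_{op}$. Without this substitution, ``fails to be surjective'' has no evident countable description; this is exactly the obstacle you name but do not overcome.

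Second, and more seriously, your reduction of the existential quantifier over $\lambda\in V$ to a countable dense set $\{\mu_k\}\subseteq V$ is incorrect as stated: $\sigma(T)$ is merely a compact set, so it can meet $V$ in a single point with irrational coordinates and contain no $\mu_k$ at all; hence $E_V\neq\bigcup_k\{T:\mu_k\in\sigma(T)\}$, and ``meeting a dense subset in a limiting sense'' cannot be certified by the pointwise conditions $\mu_k\in\sigma(T)$. Nor can proximity to the spectrum be detected by resolvent norms in a general Banach space (closeness to the spectrum forces the resolvent norm to be large, but not conversely), so the ``resolvent/approximation argument'' you allude to has no obvious implementation. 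The paper's solution is of a different nature: it writes $E_V=P_{L(X)}(\Omega)$ with $\Omega=\{(T,\lambda)\in L(X)\times V:\lambda\in\sigma(T)\}$, proves $\Omega$ is Borel (via the decomposition $A\cup B$ described above), observes that its vertical sections $\sigma(T)\cap V$ are $K_\sigma$, and then invokes Saint Raymond's theorem that the projection of a Borel set with $K_\sigma$ sections is Borel. Some such projection theorem (or a genuinely different idea, e.g.\ exploiting that the boundary of the spectrum lies in the approximate point spectrum) is indispensable here, because projections of Borel sets are in general only analytic, not Borel.
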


\begin{proof}
As $\mathcal{K}(\mathbb{D})$ is equipped with the  Hausdorff
topology, it follows from the proposition  \ref{prop:hausdorff},
that it is enough to show that for any open subset  $V$ of the disc
$\mathbb{D}$, the subset
\[
E_V= \{T \in L(X): \sigma(T) \cap V \neq \emptyset\}
\]
is Borel  in $\big{(}L(X),S_{op}\big{)}$.

Let $V$ be a fixed open subset of  $\mathbb{D}$. We have
\begin{align*}
E_V 
    &= P_{L(X)}(\Omega),
\end{align*}
where $P_{L(X)}$  stands for the canonical projection  of  $L(X)
\times \mathbb{D}$  onto $L(X)$, and
\[
\Omega = \big{\{}(T,\lambda) \in L(X) \times V: ~ \lambda \in
\sigma(T) \big{\}}.
\]

By a  descriptive set theory result from \cite{SR}, to show that
$E_V$ is a Borel set it suffices  to show that $\Omega$ is a Borel
set with $K_\sigma$ vertical sections.

For $T \in L(X)$, the
vertical section of the set $\Omega \subseteq L(X) \times
\mathbb{D}$ along the direction
 $T$ is given by
\begin{align*}
\Omega(T) &= \big{\{} \lambda \in \mathbb{D}: ~ (T,\lambda) \in  \Omega \big{\}}\\
          &= \big{\{} \lambda \in \mathbb{D}: ~ \lambda \in  V \cap \sigma(T) \big{\}}\\
          &= \sigma(T) \cap V.
\end{align*}
Thus, it is a $K_\sigma$ of $\mathbb{D}$.

Now, we need to prove that $\Omega$ is a Borel set. Put
\[ \Delta =
\big{\{}(T,\lambda) \in L(X) \times \mathbb{D}: ~ \lambda \in
\sigma(T) \big{\}}.
\]
Therefore
\[
\Omega = \Delta ~ \cap ~ L(X) \times V,
\]
Hence, to finish the proof, it is enough to prove the following claim.
\vskip0.8cm

\underline{\emph{Claim:}} $\Delta$ is a Borel set of $ L(X) \times
\mathbb{D}$.

First, note that   \quad $ \Delta = A \cup B$ \quad with
\begin{itemize}
  \item $A= \big{\{}(T,\lambda) \in L(X) \times \mathbb{D}: T-\lambda I
            \hbox{ is not   isomorph to its range }\big{\}}$
  \item $B= \big{\{}(T,\lambda) \in L(X) \times \mathbb{D}: ~ (T-\lambda
I)(X)
 \hbox{ is not dense in } X \big{\}}.$
\end{itemize}
Indeed, if  ~ $ T-\lambda I $ ~ is an isomorphism onto its range,
then $~ (T-\lambda I)(X) ~ $ is a closed subspace that will be
strict if $\lambda \in \sigma(T)$, and thus not dense in $X$.

On the other hand, since $X$ is separable, there exists a countable and dense subset $
\mathcal{Y} $ in the  sphere $S_X$ of $X$, and there exists a dense
sequence $\{x_n\}_{n\in \mathbb{N}} $ in $X$.

Now,
we will show that $A$ and $B$ are Borel sets.
Let $(T,\lambda) \in
L(X) \times \mathbb{D} $. From the definition of  $A$, We have
 $(T,\lambda) \in A $ if and only if
\[
\exists (z_n)_{n\in \mathbb{N}} \subseteq S_X : \quad \lim_{n \to
\infty} \norm{(T-\lambda I)z_n}=0 .
\]
In other term, this is equivalent to
\[
\exists (z_n)_{n\in \mathbb{N}} \subseteq \mathcal{Y}, ~ \forall k
\geq 1 ~ \exists N_k \in \mathbb{N} ~ \forall n \geq N_k: \quad
\norm{(T-\lambda I)z_n} < \frac{1}{k}.
\]
By choosing the subsequence $(z_{N_k})_{k\in \mathbb{N}}$ instead of
$(z_{n})_{n\in \mathbb{N}}$, the previous statement is equivalent to
\[
\exists (z_n)_{n\in \mathbb{N}} \subseteq \mathcal{Y}, ~ \forall k
\geq 1 ~ \exists N_k \in \mathbb{N} ~ : \quad \norm{(T-\lambda
I)z_{N_k}} < \frac{1}{k},
\]
or again,
\[
\forall k \geq 1, ~ \exists x \in \mathcal{Y}: \quad
\norm{Tx-\lambda x} < \frac{1}{k}.
\]
Therefore,
\[
A= \bigcap_{k\geq 1} \bigcup_{x \in \mathcal{Y}} A^x_k
\]
with
\[
A^x_k= \big{\{}(T,\lambda) \in L(X) \times \mathbb{D}: ~
\norm{Tx-\lambda x} < \frac{1}{k}
             \big{\}}.
\]
Since $L(X)$ is equipped with the the strong operator convergence
$S_{op}$, it follows that
 $A^x_k$ are open sets.
Hence, $A$ is a Borel set.

On the other hand, ``$(T-\lambda I)(X)$ is not dense in $X$'' is
equivalent to
\[
  \exists y \in
  S_X \text{ and }
             \exists k \geq 1 \hbox{ such that  } \forall x \in X:
             \norm {y-(T-\lambda I)x} \geq \frac{1}{k},
             \]
or again,
  \[ \exists y \in \mathcal{Y} ~ \hbox{ and }
             \exists k \geq 1 \hbox{ such that } ~ \forall n \in \mathbb{N}: ~
             \norm {y-(T-\lambda I)x_n} \geq \frac{1}{k}.
  \]
  Therefore
  \[B=\bigcup_{y \in \mathcal{Y}} \bigcup_{k \in \mathbb{N}} \bigcap_{n \in \mathbb{N}}
  B^y_{k,n} \]
with
\[
B^y_{k,n}=\Big{\{}(T,\lambda) \in L(X) \times \mathbb{D}; \quad
                   \norm {y-(T-\lambda I)x_n} \geq \frac{1}{k}  \Big{\}}.
\]
Similarly to $A^x_k$, it is not difficult to see that the sets
$B^y_{k,n}$ are Borel sets.
Hence $B$ is also a Borel set. This proves the claim and ends the
proof of the theorem  \ref{prop:Psi}.

\end{proof}


\end{document}